\newtheorem{theorem}{Theorem}[section]
\newtheorem{prop}[theorem]{Proposition}
\newtheorem{cor}[theorem]{Corollary}
\newtheorem{rem}[theorem]{Remark}
\numberwithin{equation}{section}
\newcommand{\BR}{\mathbb{R}}
\newcommand{\BZ}{\mathbb{Z}}
\newcommand{\BQ}{\mathbb{Q}}
\begin{document}

\title{The Sigma invariants of Thompson's Group $F$}

\author{Robert Bieri, Ross Geoghegan, Dessislava H. Kochloukova}

\address{Department of Mathematics, 
Johann Wolfgang Goethe-Universit\"at Frankfurt, 
D-60054 Frankfurt am Main, 
Germany \newline
\newline
Department of Mathematical Sciences, 
Binghamton University (SUNY), 
Binghamton, NY 13902-6000, USA \newline
\newline
Department of Mathematics, 
State University of Campinas, Cx. P. 6065, 13083-970 Campinas, SP, Brazil}
\email{bieri@math.uni-frankfurt.de,
ross@math.binghamton.edu, desi@ime.unicamp.br}

\thanks{The third author is partially supported by
``bolsa de produtividade de pesquisa" from CNPq, Brazil.}

\subjclass[2000]{Primary 20J05; Secondary 55U10}

\date{February, 2008}

\keywords{Thompson's group, homological and homotopical Sigma invariants}

\begin{abstract} Thompson's group $F$ is the group of all increasing
dyadic PL homeomorphisms of the closed unit interval.  We compute
$\Sigma^m(F)$ and $\Sigma^m(F;\BZ)$, the homotopical and homological
Bieri-Neumann-Strebel-Renz invariants of $F$, and show that $\Sigma^m(F)
= \Sigma^m(F;\BZ)$.  As an application, we show that, for every $m$,
$F$ has subgroups of type $F_{m-1}$ which are not of type $FP_{m}$
(thus certainly not of type $F_m$).  \end{abstract}

\maketitle

\section{Introduction}

\subsection{The group $F$}  Let $F$ denote the group of all increasing
piecewise linear (PL) homeomorphisms\footnote{Here, PL homeomorphisms
are understood to act on $[0,1]$ on the left as in \cite{diagram} rather
than on the right as in \cite{RossKen}.} $$ x: [0,1] \to [0,1] $$ whose
points of non-differentiability $\in [0,1]$ are dyadic rational numbers,
and whose derivatives are integer powers of 2.  This is known
as Thompson's Group $F$; it first appeared in \cite{McKenzie}.

\medskip

The group $F$  has an infinite presentation 
\begin{equation} \label{*11}
\langle x_0, x_1, x_2, \cdots \mid x_i^{-1} x_n x_i= x_{n+1}\hbox{
for } 0 \leq i< n\rangle  
\end{equation} 

Let $F(i)$ denote the subgroup $\langle x_i, x_{i+1}, \ldots \rangle$. The
presentation (\ref{*11}) displays $F$ as an HNN extension with base group
$F(1)$, associated subgroups $F(1)$ and $F(2)$, and stable letter $x_0$
; see \cite[Prop.~9.2.5]{Ross} or \cite{RossKen} for a proof.  Thus $F$
is an ascending\footnote{See Subsection \ref{0and1} for the definition.}
HNN-extension whose base and associated subgroups are isomorphic to $F$.

\medskip

The correspondence between the generators $x_i$ in the presentation
(\ref{*11}) and PL homeomorphisms is as in \cite{diagram}. For example, the generator
$x_0$ corresponds to the PL homeomorphism with slope $\frac{1}{2}$ on
$[0,\frac{1}{2}]$, slope $1$ on $[\frac{1}{2},\frac{3}{4}]$, and slope
$2$ on $[\frac{3}{4},1]$.

\medskip

The group $F$ has type $F_{\infty}$ i.e. there is a $K(F,1)$-complex with
a finite number of cells in each dimension \cite{RossKen}. Therefore
$F$ is finitely presented and has type $FP_{\infty}$.  Furthermore,
$F$ has infinite cohomological dimension \cite{RossKen}, $H^{*}(F,
{\mathbb Z}F)$ is trivial \cite{RossKen2}, $F$ does not contain a free
subgroup of rank 2 \cite{B-S}, and the commutator subgroup $F'$ is simple
\cite{Brown2}, \cite{diagram}. It is known that $F$ has quadratic Dehn
function \cite{Guba}. The group of automorphisms of $F$ was calculated
in \cite{Brin}.

\subsection {The Sigma invariants of a group}\label{review} By a {\it (real)
character} on $G$ we mean a homomorphism $\chi : G \to \BR$ to the
additive group of real numbers.  For a finitely generated group $G$
the {\it character sphere} $S(G)$ of $G$ is the set of equivalence
classes of non-zero characters modulo positive multiplication.  This is
best thought of as the ``sphere at infinity" of the real vector
space $Hom(G,\BR)$.  The dimension $d$ of that vector space is the
torsion-free rank of $G/G'$, and the sphere at infinity has dimension
$d-1$. We denote by $[\chi ]$ the point of $S(G)$ corresponding to $\chi $.

\medskip

We recall the Bieri-Neumann-Strebel-Renz (or Sigma) invariants of a group
$G$.   Let $R$ denote a commutative ring\footnote{Only the rings $\BZ$ and
$\BQ$ will play a role in this paper.} with $1\neq 0$, and let $m\geq 0$ be an integer. 
When $G$ is of type
$F_m$ (resp. $FP_{m}(R)$) the homotopical invariant $\Sigma^m(G)$ (resp. the
homological invariant $\Sigma^m(G;R)$), is a subset of $S(G)$.  In both
cases we have $\Sigma^{m+1}\subseteq \Sigma^{m}$.  We refer the reader
to \cite{Renz} for the precise definition, confining ourselves here to
a brief recollection:

\subsubsection {$m=0$} All groups have type $F_0$ and type $FP_{0}(R)$.  By definition $\Sigma^{0}(G)=\Sigma^{0}(G;R)=S(G)$.  This will only be of interest when we consider subgroups of $F$ in Section \ref{Subgroups}.    

\subsubsection {$m=1$} Let $X$ be a finite set of generators of $G$ and let $\Gamma ^1$ be the corresponding Cayley graph, with $G$ acting freely on $\Gamma ^1$ on the left.  The vertices of $\Gamma ^1$ are the elements of $G$ and there is an edge joining the vertex $g$ to the vertex $gx$ or each $x\in X$.

For any non-zero character $\chi  : G \to \BR$, and for any real number
$i$ define $\Gamma ^{1}_{\chi \geq i}$ to be the subgraph of $\Gamma$  spanned
by the vertices $$ G_{\chi \geq i } = \{ g \in G \mid \chi(g) \geq i \}.$$
By definition, $[\chi] \in \Sigma^1(G)$ if and only if $\Gamma ^{1}_{\chi
\geq 0}$ is connected.  For a detailed treatment of $\Sigma ^1$ from a topological point of view, see \cite[Sec.~16.3]{Ross}.

\subsubsection {$m=2$} Let $\langle X \mid T \rangle $ be a finite
presentation of  $G$.  Choose a $G$-invariant orientation for each edge
of $\Gamma ^1$ and then form the corresponding Cayley complex $\Gamma ^2$
by attaching 2-cells equivariantly to $\Gamma ^1$ using attaching maps
indicated by the relations in $T$.  Define $\Gamma ^{2}_{\chi \geq i }$
to be the subcomplex of  $\Gamma ^2$ consisting of $\Gamma ^{1}_{\chi
\geq i}$ together with all the 2-cells which are attached to it.

By definition, $[\chi] \in \Sigma^2(G)$ if and only if $[\chi ]\in \Sigma^{1}(G)$ and there is a
nonpositive $d$ such that the map 
\begin{equation} \label{desi100}
\pi_1(\Gamma ^{2}_{\chi \geq 0}) \to \pi_1(\Gamma ^{2}_{\chi \geq d }),
\end{equation} 
induced by the inclusion of spaces $\Gamma ^{2}_{\chi \geq
0} \subseteq \Gamma ^{2}_{\chi \geq d}$ is zero (and $\Gamma ^{1}_{\chi
\geq 0}$ is connected). See, for
example, \cite{Renz3}. Note that $\Gamma ^2$ is the $2$-skeleton of the universal cover of a $K(G,1)$-complex which has finite $2$-skeleton.

\subsubsection{$m>2$} The higher $\Sigma^m(G)$ are defined similarly, for groups of type $F_m$, using the $m$-skeleton, $\Gamma ^{m}$, of the universal cover of a $K(G,1)$-complex having finite $m$-skeleton. See \cite{Renz}.

\subsubsection{The homological case} For a commutative ring $R$, the
homological Sigma invariants $\Sigma^{m}(G;R)$ are defined similarly
when the group $G$ is of type $FP_{m}(R)$, using a free resolution of the
trivial (left) $RG$-module $R$ which is finitely generated in dimensions
$\leq m$; see \cite{Renz} for details.  Among the basic facts to be used
below, which hold for all rings $R$, are: $\Sigma^{1}(G)=\Sigma^{1}(G;R)$;
and $\Sigma^{m}(G)\subseteq \Sigma^{m}(G;R)$ when both are defined
(i.e. when $G$ has type $F_m$.)  If $G$ is finitely presented then
``type $F_m$" and ``type $FP_m(\BZ)$" coincide.  In that case,
$\Sigma^m(G;\BZ)$ can also be understood from the above topological
definition of $\Sigma^m(G)$, replacing statements about homotopy groups
by the analogous statements about reduced $\BZ$-homology groups;
more precisely, one requires \begin{equation} \label{desi101} \tilde
{H}_{k-1}(\Gamma ^{k}_{\chi \geq 0}) \to \tilde{H}_{k-1}(\Gamma ^{k}_{\chi
\geq d }), \end{equation} to be trivial for all $k\leq m$.

\medskip

{\bf Remark:} The definition of $\Sigma^1$ given here agrees with the
now-established conventions followed, for example, in \cite{Renz} and
in \cite{RossBieri}.  It differs by a sign from the $\Sigma^1$-invariant
defined in \cite{Bieri}.  This arises from our convention that $RG$-modules are left modules, while in \cite{Bieri} they are right modules.

\subsection {Some facts about Sigma invariants}\label{conv} It is
convenient to write ``$[\chi ] \in \Sigma^{\infty}$" as an abbreviation for
``$[\chi ] \in \Sigma^{m}$ for all $m$".

\medskip

Among the principal results of  $\Sigma$-theory for a group $G$ of type
$F_m$ (resp. type $FP_{m}(R)$) are: (1) $\Sigma^m(G)$ (resp. $\Sigma^m(G;R)$) is an open subset of the character sphere $S(G)$, and (2)
$\Sigma^m(G)$ (resp. $\Sigma^m(G;R)$) classifies all normal subgroups
$N$ of $G$ containing the commutator subgroup $G'$ by their finiteness
properties in the following sense:

\begin{theorem} \label{renzbieri} \cite{Renz}, \cite{Renz2}, \cite{Renz3}
Let $G$ be a group of type $F_m$ (resp. type $FP_m(R)$)  with a normal
subgroup $N$ such that $G/N$ is abelian. Then $N$ is of type $F_m$
(resp. $FP_m$) if and only if for every non-zero character $\chi$
of $G$ such that $\chi(N) = 0$ we have $[\chi] \in \Sigma^m(G)$
(resp. $[\chi] \in \Sigma^m(G;R)$).  \end{theorem}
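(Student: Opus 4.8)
The plan is to deduce both implications from the geometric description of $\Sigma^m$ recalled above, by moving finiteness data between the free $G$-space and the regular $N$-cover. Fix a $K(G,1)$-complex $Y$ with finite $m$-skeleton, let $\tilde Y\to Y$ be the universal cover with its free, skeletally cocompact $G$-action, and set $Y_N:=\tilde Y/N$, a $K(N,1)$ on which $Q:=G/N$ acts. Since $G$ is finitely generated, $Q$ is a finitely generated abelian group; letting $T$ be its torsion subgroup and $N_1\le G$ the preimage of $T$, we have $[N_1:N]<\infty$, so $N$ is of type $F_m$ iff $N_1$ is, while $G/N_1\cong\BZ^n$ is free abelian and $S(G,N)=S(G,N_1)$ because every real character kills $T$. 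Thus we may assume $Q=\BZ^n$, and $S(G,N)$ is the subsphere of those $[\chi]$ that factor through $Q$, a copy of the $(n-1)$-sphere. Each such $\chi$ lifts to a $G$-equivariant height function $h_\chi:\tilde Y\to\BR$ with $h_\chi(g\cdot x)=\chi(g)+h_\chi(x)$; as $\chi(N)=0$ it is $N$-invariant and descends to $\bar h_\chi:Y_N\to\BR$, and $[\chi]\in\Sigma^m(G)$ precisely when the filtration $\{\tilde Y_{h_\chi\ge t}\}$ is essentially $(m-1)$-connected.

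For the implication $N$ of type $F_m\Rightarrow S(G,N)\subseteq\Sigma^m(G)$, the point is that type $F_m$ gives $Y_N$ the homotopy type, through dimension $m$, of a complex with finite $m$-skeleton. Here I would record the standard reformulation of $\Sigma^m$ in module terms --- $[\chi]\in\Sigma^m(G)$ iff the trivial module is of type $FP_m$ over the monoid ring $RG_\chi$, where $G_\chi=\{g:\chi(g)\ge 0\}$ --- and observe that finiteness of the trivial module over $RN$ propagates to finiteness over the larger ring $RG_\chi\supseteq RN$, since $G_\chi$ is a union of $N$-cosets and hence $RG_\chi$ is free as an $RN$-module. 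This gives $[\chi]\in\Sigma^m(G)$ for every $\chi$ factoring through $Q$, and this direction is comparatively formal.

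The substance is the converse, $S(G,N)\subseteq\Sigma^m(G)\Rightarrow N$ of type $F_m$, which I would prove by induction on $n=\mathrm{rank}\,Q$. Because $\Sigma^m(G)$ is open and $S(G,N)$ is compact, cover it by finitely many classes $[\chi_1],\dots,[\chi_r]$ for which the essential-connectivity bounds are uniform. The inductive engine is a discrete Morse-theory and patching construction: using the uniform bounds along these finitely many directions one collapses one $\BZ$-factor of $Q$ at a time, converting the $G$-finite cellular data on $\tilde Y$ into a model of $Y_N$ that is $Q$-cocompact through dimension $m$ --- equivalently, building from the given $G$-finite free resolution a resolution of the trivial module that is finitely generated over $RN$ up to dimension $m$. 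The base case is the rank-one situation $Q=\BZ$, where $S(G,N)=S^0$ consists of two points and the statement is the classical one (the Bieri--Strebel theorem when $m=1$, and its higher-dimensional analogues for larger $m$).

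The main obstacle is exactly this converse patching step. The difficulty is that one cannot argue by naive restriction: when $Q$ is infinite, $RG$ is free of infinite rank over $RN$, so restricting a $G$-finite resolution to $RN$ never yields something finitely generated. The essential-connectivity hypotheses must instead be used constructively to fold up the $Q$-directions, and the crux is to make these constructions \emph{uniform} across the entire sphere $S(G,N)$ rather than merely at a single character --- this is precisely what forces the combination of openness, compactness, and the coordinate-by-coordinate induction. The homological statement follows the same outline, with reduced homology in place of homotopy groups and chain-level patching in place of the geometric collapsing.
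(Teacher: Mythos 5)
The paper does not prove this theorem at all: it is quoted with citations to \cite{Renz}, \cite{Renz2}, \cite{Renz3}, so there is no in-paper argument to compare yours against; judged on its own, your sketch follows the broad Bieri--Renz strategy (reduction to $Q=G/N$ free abelian, the monoid-ring reformulation of $\Sigma^m$, openness of $\Sigma^m$ plus compactness of the invariant subsphere), but each direction has a genuine gap. In the ``easy'' direction, the principle you invoke --- that finiteness of the trivial module over $RN$ ``propagates'' to the larger ring $RG_\chi$ because $RG_\chi$ is free as an $RN$-module --- is false as a general ring-theoretic statement: $\BZ$ is $FP_\infty$ over $\BZ$ and $\BZ G$ is free as a $\BZ$-module, yet the trivial module $\BZ$ is not even $FP_1$ over $\BZ G$ when $G$ is not finitely generated. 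What freeness actually gives is that inducing an $RN$-resolution up yields a finite-type resolution of $RG_\chi\otimes_{RN}R\cong R[Q_\chi]$ (the monoid ring of the image monoid), \emph{not} of the trivial module $R$; passing from $R[Q_\chi]$ to $R$ needs a further argument (for instance: $\ker\chi$ is of type $FP_m$, being an extension of $N$ by a finitely generated abelian group, and one then runs the valuation machinery over $RG_\chi$), and this extra step is exactly where Bieri--Renz do real work rather than a formal propagation.

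In the ``hard'' direction the inductive engine is named but never constructed, and your proposed base case is circular: for $Q=\BZ$ and $m\geq 2$, the equivalence ``$\ker\chi$ of type $FP_m$ $\Leftrightarrow$ $[\chi],[-\chi]\in\Sigma^m$'' is not a classical prior result --- it is precisely the rank-one instance of the theorem being proved (the paper's Corollary \ref{renzbiericor} is \emph{deduced} from this theorem), with only the $m=1$ case (Bieri--Strebel) available independently. The substantive content of Renz's proof is a uniform shifting lemma: given finitely many characters whose $\Sigma^m$-neighborhoods cover the invariant subsphere, with uniform connectivity (or chain-contraction) bounds, one modifies a $G$-finite free resolution so that generators and contracting homotopies in degrees $\leq m$ can be pushed into a single level slab \emph{simultaneously} for all these characters, producing a resolution finitely generated over $RN$ up to dimension $m$. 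Your sketch postulates that ``collapsing one $\BZ$-factor at a time'' achieves this without supplying the mechanism, and the coordinate-by-coordinate reduction is itself delicate, since the $\Sigma$-invariants of intermediate quotient stages are not controlled by $\Sigma^m(G)$ without additional argument; the homotopical case ($m\geq 2$) needs further care beyond ``the same outline,'' since one must first secure finite presentability of $N$ before type $F_m$ even makes sense.
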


A non-zero character is {\it discrete} if its image in $\BR$ is an infinite cyclic subgroup. A special case of Theorem \ref{renzbieri} (the only one we will use) is:

\begin{cor}\label{renzbiericor} If the non-zero character $\chi$ is
discrete then its kernel has type $F_m$ (resp. type $FP_{m}(R)$) if and only if
$[\chi ]$ and $[-\chi ]$ lie in $\Sigma^m(G)$ (resp. $\Sigma^m(G;R)$).
\end{cor}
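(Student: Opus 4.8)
The plan is to obtain the corollary as a direct specialization of Theorem~\ref{renzbieri}, taking the normal subgroup $N$ there to be the kernel of the given discrete character $\chi$. The whole content is to check that, for this choice of $N$, the family of non-zero characters vanishing on $N$ reduces, on the character sphere, to exactly the two points $[\chi]$ and $[-\chi]$.

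First I would set $N := \ker\chi$. Since $\chi$ is discrete, its image $\chi(G)$ is an infinite cyclic subgroup of $\BR$, and the first isomorphism theorem gives $G/N \cong \chi(G) \cong \BZ$. As the kernel of a homomorphism, $N$ is normal in $G$, and $G/N$ is abelian, so the hypotheses of Theorem~\ref{renzbieri} hold. That theorem then asserts that $N$ is of type $F_m$ (resp.\ $FP_m(R)$) if and only if $[\psi]\in\Sigma^m(G)$ (resp.\ $\Sigma^m(G;R)$) for \emph{every} non-zero character $\psi$ of $G$ with $\psi(N)=0$.

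It remains to describe this family of characters. A character $\psi:G\to\BR$ satisfies $\psi(N)=0$ precisely when it factors through $G/N$, i.e.\ corresponds to an element of $Hom(G/N,\BR)\cong Hom(\BZ,\BR)\cong\BR$. Hence every such $\psi$ is a real scalar multiple $t\chi$ of $\chi$, and the non-zero ones split, modulo positive scaling, into the positive multiples, which represent $[\chi]$, and the negative multiples, which represent $[-\chi]$. Therefore the condition supplied by Theorem~\ref{renzbieri} is equivalent to the assertion that both $[\chi]$ and $[-\chi]$ lie in $\Sigma^m(G)$ (resp.\ $\Sigma^m(G;R)$), which is exactly the statement of the corollary.

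I do not expect a genuine obstacle here, since the argument is purely a matter of specialization and bookkeeping. The one point deserving care is the identification in the previous paragraph, where discreteness of $\chi$ is used essentially: it guarantees $G/N\cong\BZ$, so that $Hom(G/N,\BR)$ is one-dimensional and its non-zero elements collapse to just the antipodal pair $[\chi],[-\chi]$. Without discreteness the quotient $G/N$ could have torsion-free rank greater than one, and the set of relevant characters would then trace out a higher-dimensional subsphere rather than a pair of points.
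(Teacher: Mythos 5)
Your proposal is correct and takes essentially the same route as the paper, which presents the corollary as an immediate specialization of Theorem~\ref{renzbieri} without further argument. You have simply written out the routine verification the paper leaves implicit: setting $N=\ker\chi$, using discreteness to get $G/N\cong\BZ$, and observing that the non-zero characters vanishing on $N$ are exactly the real multiples of $\chi$, hence represent only the antipodal pair $[\chi]$, $[-\chi]$ on $S(G)$.
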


The invariants $\Sigma^m(G)$ and $\Sigma^m(G;R)$ have been calculated
for only a few families of groups $G$, even fewer when $m>1$. For metabelian
groups $G$ of type $F_m$ there is the still-open $\Sigma^m$-Conjecture:
$\Sigma^m(G)^c =\Sigma^m(G;\BZ)^c =conv_{\leq m}\Sigma^1(G)^c$,
where\footnote{It is customary to use the notation $A^c$ for the
complement of the set $A$ in a character sphere; e.g. $\Sigma^{m}(G)^c$
or $\Sigma^m(G;R)^c$.} $conv_{\leq m}$ denotes the union of the
(spherical) convex hulls of all $\leq m$-tuples; this is known for $m =
2$  \cite{Desi1} but only for larger $m$ under strong restrictions on $G$
\cite{Desi2}, \cite{Meinert}. A complete description of $\Sigma^m(G)$
and  $\Sigma^m(G;\BZ)$ for any right angled Artin group $G$ is given
in \cite{Meier}.  Recently the homotopical invariant $\Sigma^m(G)$
has been generalized to an invariant of group actions on proper CAT(0)
metric spaces \cite{RossBieri}; the corresponding invariants for the
natural action of $SL_{n}(\BR )$ on its symmetric space have been
calculated: for $n=2$ (action by M\"{o}bius transformations on the
hyperbolic plane) in \cite{RossBieri2}, and for $n>2$ in \cite{Rehn}.
A similar generalization of the homological case, $\Sigma^m(G;R)$,
to the CAT(0) setting will appear in \cite{RossBieri4}.

\subsection{Sigma invariants of $F$} In this paper we calculate the
Sigma invariants $\Sigma^m(F)$ and $\Sigma^m(F;R)$ of the group $F$.
For $x\in F$ and $i=0$ or $1$ let $\chi _{i}(x) := log_{2}x'(i)$, i.e. the
(right)  derivative of the map $x$ at 0 is $2^{\chi _{0}(x)}$ and the
(left) derivative of $x$ at 1 is $2^{\chi _{1}(x)}$.  In terms of
the presentation (\ref{*11}) $\chi_{0}(x_0)=-1$ and $\chi_{0}(x_i)=0$
for $i\geq 1$, while $\chi_{1}(x_i)=1$ for all $i\geq 0$.  These two
characters are linearly independent.  Thus $[\chi _{0}]$ and $[\chi _{1}]$
are not antipodal points of the circle $S(F)$. From (\ref{*11}) we see
that the real vector space $Hom(F,\BR)$ has dimension 2, so these two
characters span $Hom(F,\BR)$. It follows that the convex sum of $[\chi
_{0}]$ and $[\chi _{1}]$ is a well-defined interval in the circle $S(F)$;
it members are the points $\{[a\chi _{0}+b\chi _{1}]\:|\:a,b>0\}$.
We call it the ``shorter interval".  We call $\chi_0$ and $\chi_1$ the
``special" characters.

\medskip

There is a useful automorphism $\nu$ of $F$ which is most easily expressed
when $F$ is regarded as a group of PL homeomorphisms as above: it is
conjugation by the homeomorphism $t \mapsto (1-t)$; if one draws the
graph of the PL homeomorphism $x\in F$ in the square $[0,1]\times [0,1]$
then the graph of $\nu (x)$ is obtained by rotating that square through
the angle $\pi$. This $\nu$ induces an automorphism of $Hom(F, \BR)$
and consequently an automorphism of $S(F)$ which permutes the elements
of $\Sigma^m(F)$ (resp. $\Sigma^m(F;R)$).  In particular, it swaps the
points $[\chi_0]$ and $[\chi_1]$.  We refer to this as ``$\nu$-symmetry"
of the Sigma invariants.

\medskip

The Theorems of this paper can now be stated:

\medskip {\bf Theorem A.} {\it $\Sigma^1(F)$ consists of all points of
$S(F)$ except $[\chi _{0}]$ and $[\chi _{1}]$.  The points of $S(F)$ lying
in the open convex hull of  $[\chi _{0}]$ and $[\chi _{1}]$, i.e. in the shorter interval,  are in $\Sigma^1(F)$ but are not in $\Sigma^2(F)$. The other (longer) open interval between  $[\chi _{0}]$ and $[\chi _{1}]$ is the set $\Sigma^\infty (F)$. The sets $\Sigma^m(F;R)$ and $\Sigma^m(F)$ coincide for all $m$ and any ring $R$.}

\medskip

One part of this is not new: $\Sigma ^{1}(F)$ was computed in \cite{Bieri}.

\medskip {\bf Theorem B.} {\it For every $m\geq 1,F$ contains subgroups of type $F_{m-1}$
which are not of type $FP_{m}(\BZ)$ (thus certainly not of type $F_m$).}

\medskip

Theorem A is proved in Section \ref{Proof}, and Theorem B is proved (using \cite{RossBieri3}) in Section \ref{Subgroups}.

\medskip

{\bf Acknowledgment} We thank Dan Farley who asked about the possibility
of embedding powers of $F$ in $F$ to get non-normal subgroups of $F$
with more interesting finiteness properties than can be found among the
kernels of characters on $F$ itself.  His question led to the writing
of the paper \cite{RossBieri3} and thus to our Theorem B.

\section{Proof of Theorem A}\label{Proof}

\subsection {$\Sigma^0$ and $\Sigma^1$}\label{0and1}  

By an {\it ascending} HNN extension we mean a group presented by $\langle
H,t|t^{-1} h t=\phi (h)$ for $h\in H\rangle$ where $\phi:H\to H$ is
a monomorphism.  Such a group is denoted by  $H*_{\phi, t}$.

\medskip

We begin by citing:

\medskip

\begin{theorem}\label{HNN} Let $G$ decompose as an ascending HNN
extension $H*_{\phi, t}$.  Let $\chi :G \to \BR$  be the character given
by $\chi(H) = 0$ and $\chi(t) = 1$.  
\begin{enumerate} 
\item If $H$ is of type $F_m$ (resp. $FP_{m}(R)$) then $[\chi] \in
\Sigma^{m}(G)$ (resp. $[\chi] \in \Sigma^m(G;R)$).
\item If $H$ is finitely generated and $\phi$ is not onto $H$ then $[-
\chi]\in  \Sigma^{1}(G)^c$.  
\end{enumerate} 
\end{theorem}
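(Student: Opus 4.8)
\emph{Overall strategy.} Since $G = H *_{\phi,t}$ is an ascending HNN extension, it is the fundamental group of the mapping torus $T_f$ of a cellular self-map $f\colon Y\to Y$ of a $K(H,1)$-complex $Y$, which I would choose with finite $m$-skeleton (possible because $H$ has type $F_m$) and with $f$ inducing $\phi$ on $\pi_1$. Then $T_f$ is a $K(G,1)$ with finite $m$-skeleton, and the projection $T_f\to S^1$ induces the character $\chi\colon G\to\pi_1(S^1)=\BZ\subseteq\BR$. The plan is to work in the $m$-skeleton $\Gamma^m$ of the universal cover $\widetilde{T_f}$, equipped with the height function $h=\widetilde\chi\colon\Gamma^m\to\BR$ lifting the projection. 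Left translation by $t$ is then a cellular automorphism of $\Gamma^m$ that shifts $h$ by $+1$, so the whole filtration $\{\Gamma^m_{h\ge s}\}$ is invariant under this unit shift, which is what will make all connectivity estimates uniform in the level.

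\emph{Part (1).} The key observation is that in the universal cover the slab of $\Gamma^m$ at each integer height is a copy of the $m$-skeleton of $EH=\widetilde Y$, and consecutive slabs are joined by mapping cylinders of the lift $\widetilde f$. Because $Y$ is aspherical with finite $m$-skeleton, each such slab is $(m-1)$-connected. The superlevel set $\Gamma^m_{h\ge0}$ is therefore the forward mapping telescope of the $(m-1)$-connected slabs sitting at heights $0,1,2,\dots$, glued along $(m-1)$-connected cylinders. By van Kampen together with a Mayer--Vietoris/Hurewicz induction the finite sub-telescopes are $(m-1)$-connected, and this connectivity passes to the colimit since $\pi_k$ (for $k\le m-1$) commutes with the telescope; hence $\Gamma^m_{h\ge0}$ is essentially $(m-1)$-connected. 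The $t$-shift makes this estimate uniform over all heights, which is exactly the condition $[\chi]\in\Sigma^m(G)$. For the homological statement one replaces $Y$ by a free $RG$-resolution of $R$ that is finitely generated in degrees $\le m$ and runs the identical argument at the chain level (an ``algebraic telescope''), verifying the triviality condition \eqref{desi101}. I expect this chain-level bookkeeping, together with pinning down a single uniform shift constant valid across all skeleta, to be the main technical obstacle; the homotopy-theoretic heart, namely that the ascending direction produces a telescope of highly connected pieces, is conceptually straightforward.

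\emph{Part (2).} Here $H$ is finitely generated, so $G=\langle H,t\rangle$ has type $F_1$ and $\chi$ is discrete (its image is $\BZ$). By Part (1) with $m=1$ we already know $[\chi]\in\Sigma^1(G)$. I would then analyze $\ker\chi$: from the relation $t^{-1}Ht=\phi(H)$ one gets $H\subseteq tHt^{-1}$, and $\ker\chi=\bigcup_{n\ge0}t^nHt^{-n}$ is an increasing union of isomorphic copies of $H$. The hypothesis that $\phi$ is not onto says precisely that $H\subsetneq tHt^{-1}$, so the union is \emph{strictly} increasing; any finite generating set would lie in a single stage $t^{N}Ht^{-N}$, forcing stabilization, so $\ker\chi$ is not finitely generated, i.e. not of type $F_1$. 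Finally Corollary \ref{renzbiericor} asserts that $\ker\chi$ is of type $F_1$ if and only if both $[\chi]$ and $[-\chi]$ lie in $\Sigma^1(G)$; since $\ker\chi$ fails to be finitely generated while $[\chi]\in\Sigma^1(G)$, we must have $[-\chi]\in\Sigma^1(G)^c$, as required. This part presents no serious obstacle once Part (1) is in hand.
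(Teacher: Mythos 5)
Your part (2) is essentially identical to the paper's own argument: the kernel is the strictly ascending union $\bigcup_{n\ge 0}t^{n}Ht^{-n}$, hence not finitely generated, and combining this with part (1) and Corollary \ref{renzbiericor} forces $[-\chi]\in\Sigma^1(G)^c$. For part (1) the paper does not argue directly at all — it cites \cite[Prop.~4.2]{Meinert} for the homological case (all $m$) and \cite[Thm.~4.3]{Meinert2} for the homotopical case $m=2$, from which the homotopical case for all $m$ follows — so a correct direct telescope proof would be a legitimate alternative route. However, your direct proof contains a genuine error: in the universal cover $\widetilde{T_f}$ the slab $h^{-1}(n)$ is \emph{not} a single copy of $\widetilde{Y}$. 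It is a disjoint union of copies of $\widetilde{Y}$, one for each coset $gH$ with $\chi(g)=n$; at height $0$ these are indexed by $N/H$ with $N=\ker\chi=\bigcup_{n}t^{n}Ht^{-n}$, and there are infinitely many of them precisely when $\phi$ is not onto. Consequently $\Gamma^m_{h\ge 0}$ is not a single forward mapping telescope but a \emph{tree} of telescopes — a graph of spaces over the superlevel subtree of the Bass--Serre tree, in which $[H:\phi(H)]$ components at height $n$ merge upward into each component at height $n+1$.

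This is not a cosmetic slip, because the single-telescope picture proves too much: it is symmetric under $\chi\mapsto-\chi$ (the sublevel set would be an increasing union of truncated backward telescopes, each deformation retracting onto its top slab, hence also essentially $(m-1)$-connected), so your argument as written would equally yield $[-\chi]\in\Sigma^m(G)$, contradicting your own part (2) whenever $\phi$ is not onto. The upward merging you suppressed is exactly the feature that breaks the $\pm\chi$ symmetry. The argument is repairable along your lines: in the Bass--Serre tree each vertex has a unique ascending edge, so every geodesic is ``up-then-down''; hence the superlevel subtree is connected (indeed contractible) while sublevel sets are badly disconnected, and $\Gamma^m_{h\ge 0}$ becomes a tree of spaces with $(m-1)$-connected vertex and edge pieces over a contractible base. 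A van Kampen/Mayer--Vietoris induction over this tree (not over a linear telescope), together with the $t$-shift uniformity you correctly identified, then gives essential $(m-1)$-connectedness. The homological case requires the analogous bookkeeping — this is exactly the content of \cite[Prop.~4.2]{Meinert} — and note that the paper obtains the homotopical statement for all $m$ from the homological statement plus the $m=2$ homotopical case, a deduction your sketch does not need but also does not supply a substitute for.
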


\begin{proof} The homological case of (1) for all $m$ is
\cite[Prop.~4.2]{Meinert} and the homotopical case for $m=2$ is a special
case of \cite[Thm.~4.3]{Meinert2}.  The homotopical case of (1) for all
$m$ then follows. 

(2) is elementary: we recall the argument. Let $N$ be the kernel of
$\chi$. By (1) and Corollary \ref{renzbiericor}, (2) is equivalent to claiming
that the group $N$ is not finitely generated. The hypothesis that
$\phi$ is not onto implies $t^{-1}Ht$ is a proper subgroup of $H$.
Thus $N=\cup _{n\geq 1}t^{n}Ht^{-n}$ is a proper ascending union, so it 
cannot be finitely generated.  \end{proof}

\medskip

Applying Theorem \ref{HNN} together with ``$\nu$-symmetry" to the group
$F$, i.e. $G = F$, $t = x_0$,  $H = F(1)$, and $\chi = - \chi_{0}$,
we get part of Theorem A:

\medskip 

\begin{cor}\label{CorB} $\{[- \chi_0], [- \chi_1]\}\subseteq
\Sigma^{\infty}(F)$ and $\{[\chi_0], [\chi_1]\}\subseteq
\Sigma^{1}(F)^c$.  
\end{cor}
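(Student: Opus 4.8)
The plan is to obtain Corollary \ref{CorB} as a direct application of Theorem \ref{HNN}, exploiting the description of $F$ as an ascending HNN extension together with the $\nu$-symmetry of the Sigma invariants. First I would recall from the presentation (\ref{*11}) that $F = F(1) *_{\phi, x_0}$, where $\phi$ is conjugation by $x_0$ sending $F(1)$ isomorphically onto $F(2)$. Since $F(1) \cong F$ is of type $F_\infty$ (equivalently $FP_\infty(R)$ for any ring $R$), I would apply part (1) of Theorem \ref{HNN} with $G = F$, $t = x_0$, $H = F(1)$, and the character $\chi$ defined by $\chi(H) = 0$, $\chi(x_0) = 1$. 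The crucial point is to check that this $\chi$ equals $-\chi_0$: from the stated values $\chi_0(x_0) = -1$ and $\chi_0(x_i) = 0$ for $i \geq 1$, we have $(-\chi_0)(x_0) = 1$ and $(-\chi_0)(x_i) = 0$ for $i \geq 1$, so $-\chi_0$ vanishes on $F(1) = \langle x_1, x_2, \ldots \rangle$ and sends $x_0$ to $1$, exactly matching $\chi$. Hence part (1) gives $[-\chi_0] \in \Sigma^\infty(F)$ and $[-\chi_0] \in \Sigma^\infty(F;R)$.

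Next I would apply part (2) of Theorem \ref{HNN} to the same data. Since $F(1)$ is finitely generated and $\phi$ is not onto $F(1)$ (its image is the proper subgroup $F(2)$), part (2) yields that the opposite point lies in the complement of $\Sigma^1$; that is, $[-(-\chi_0)] = [\chi_0] \in \Sigma^1(F)^c$. To pass from the single special character $\chi_0$ to $\chi_1$, I would invoke $\nu$-symmetry: the automorphism $\nu$ induces an automorphism of $S(F)$ that permutes the elements of each $\Sigma^m(F)$ (resp.\ $\Sigma^m(F;R)$) and, as noted in the excerpt, swaps $[\chi_0]$ and $[\chi_1]$. Since $\nu$ preserves each Sigma set and its complement, the membership statements for $[-\chi_0]$ and $[\chi_0]$ transfer verbatim to $[-\chi_1]$ and $[\chi_1]$.

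Combining these, $\{[-\chi_0], [-\chi_1]\} \subseteq \Sigma^\infty(F)$ and $\{[\chi_0], [\chi_1]\} \subseteq \Sigma^1(F)^c$, which is exactly the assertion of the corollary. I do not expect any genuine obstacle here, since both halves are immediate consequences of results already available: Theorem \ref{HNN} supplies the HNN machinery and $\nu$-symmetry supplies the reflection. The only point demanding care is purely bookkeeping, namely matching signs correctly when identifying the character $\chi$ of Theorem \ref{HNN} with $-\chi_0$, and remembering that it is $[\chi_0]$ (not $[-\chi_0]$) that lands in the complement in part (2); getting these two sign conventions straight is the one place where a careless slip could invert the statement.
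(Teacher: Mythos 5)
Your proposal is correct and is exactly the paper's argument: the paper proves Corollary \ref{CorB} in one line by applying Theorem \ref{HNN} with $G=F$, $t=x_0$, $H=F(1)$, and $\chi=-\chi_0$, then invoking $\nu$-symmetry to transfer the conclusions from $[\pm\chi_0]$ to $[\pm\chi_1]$, precisely as you do. Your sign bookkeeping (identifying the HNN character with $-\chi_0$ and noting that it is $[\chi_0]$ that lands in $\Sigma^1(F)^c$ via part (2)) matches the intended reading of the paper's terser statement.
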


Theorem 8.1 of \cite{Bieri} is the assertion that the complement of the two-point set $\{[\chi_0], [\chi_1]\}$ is precisely\footnote{But note the change of conventions explained in the Remark at the end of Subsection \ref{review}.} $\Sigma^{1}(F)$.

\subsection{The ``longer" interval}

The following is proved by combining two theorems of H. Meinert, namely
\cite[Prop.~4.1]{Meinert} and \cite[Thm.~B]{Meinert2}:

\medskip

\begin{theorem} Let $G$ decompose as an ascending HNN extension $H*_{\phi,
t}$. Let $\chi:G\to \BR$ be a character such that $\chi |H\neq 0$.
If $H$ is of type $F_{\infty }$ and if $[\chi |H]\in \Sigma^{\infty}(H)$
then $[\chi]\in \Sigma^{\infty}(G)$.
\end{theorem}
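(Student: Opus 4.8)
The plan is to realize $\chi$ as a height function on the universal cover of a mapping-torus model for $G$, and to analyze the filtration by superlevel sets, letting the ascending HNN structure reduce everything to the assumed behaviour of $\chi|_H$ on $H$. First I would fix a $K(H,1)$ complex $Y$ with finite skeleta (available since $H$ is $F_\infty$), realize the monomorphism $\phi$ by a cellular map $f\colon Y\to Y$, and form the mapping torus $M$ of $f$, so that $\pi_1(M)=H*_{\phi,t}=G$ and $M$ has finite skeleta. Its universal cover $\tilde M$ is assembled from copies $\tilde Y_v$ of the contractible space $\tilde Y$, one over each vertex $v$ of the Bass--Serre tree $T$, glued along mapping-cylinder tubes realizing $\tilde f$; since $T$, the sheets, and the tubes are all contractible, a homotopy-colimit argument over $T$ shows $\tilde M$ is contractible, so $M$ is a $K(G,1)$. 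I would then lift $\chi$ to a $G$-equivariant cellular height function $h\colon\tilde M\to\BR$. Because characters are homomorphisms to the abelian group $\BR$, the defining relations force $\chi\circ\phi=\chi|_H$, and the key computation is that on the sheet $\tilde Y_v$ one has $h=\widetilde{\chi|_H}+c\,\ell(v)$, where $c=\chi(t)$, $\ell(v)$ is the level of $v$ under $G\to\BZ$, and $\widetilde{\chi|_H}$ is the $H$-equivariant Morse function for $\chi|_H$. By the standard criterion, $[\chi]\in\Sigma^\infty(G)$ is equivalent to the filtration $\{\tilde M_{h\ge s}\}_{s}$ being essentially $(m-1)$-connected for every $m$.

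Two ingredients should feed into this. The horizontal one: $[\chi|_H]\in\Sigma^\infty(H)$, together with cocompactness of the $H$-action on $\tilde Y$, makes the superlevel filtration of each sheet essentially $(m-1)$-connected, with a descent bound uniform in the cutoff. The vertical one is the ascending (confluent) structure of $T$: each vertex has a unique ascending edge, so along any simple path the level is unimodal, and hence any finite set of sheets has a common ancestor sheet $\tilde Y_u$ lying above all of them. The guiding intuition is then that a $k$-sphere in $\tilde M_{h\ge s}$ meets only finitely many sheets; one pushes it up the tubes, which realize $\tilde f$ and are height-compatible because $\chi\circ\phi=\chi$, into the single sheet $\tilde Y_u$, and there bounds it using the horizontal ingredient. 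The homological shadow of this is a Mayer--Vietoris (or tree) spectral sequence whose $E^1$-terms are the sheet and tube superlevel homologies governed by $\Sigma^\infty(H)$, with the confluence of $T$ collapsing the assembly; this is the route I would make rigorous first, obtaining $\Sigma^\infty(G;R)$.

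The hard part will be making the descent $s-s'$ uniform in $s$. The push-up to a common ancestor can require climbing arbitrarily high in $T$, and because $\phi$ is not onto the tubes do not attach by homeomorphisms but realize the proper inclusion $\phi(H)\le H$; correspondingly the effect on heights depends on $\mathrm{sign}(\chi(t))$, since an up-tube raises $h$ when $c>0$, lowers it when $c<0$, and is flat when $c=0$. Controlling this uniformly is the real work, and it is exactly what the two cited theorems of Meinert package: the homological core, which the tree spectral sequence makes uniform, and a genuinely homotopical refinement needed to upgrade $\Sigma^\infty(G;R)$ to $\Sigma^\infty(G)$, an upgrade that is not formal since in general $\Sigma^m(G)\subsetneq\Sigma^m(G;R)$. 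I would therefore structure the final argument as: prove the homological statement by the tree spectral sequence, anchored by the known fact that the pure HNN direction already lies in $\Sigma^\infty(G)$, and then invoke the homotopical combination theorem to pass to $\Sigma^\infty(G)$.
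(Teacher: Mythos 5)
Your proposal lands on essentially the same argument as the paper, which offers no independent proof of this theorem: it simply combines Meinert's homological result \cite[Prop.~4.1]{Meinert} with his homotopical $\Sigma^2$ theorem \cite[Thm.~B]{Meinert2} (using the standard fact that $\Sigma^m(G)=\Sigma^2(G)\cap\Sigma^m(G;\BZ)$ for $m\geq 2$), and these are exactly the two inputs to which your sketch ultimately defers. Your mapping-torus/Bass--Serre scaffolding is a reasonable account of the geometry those theorems encapsulate, and you correctly identify the uniform descent bound (delicate when $\chi(t)<0$) and the non-formal homological-to-homotopical upgrade as the content being outsourced to Meinert.
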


We use this to show that whenever $\chi : F \to \BR$ is such that
$\chi(x_1) < 0$ we always have  $[\chi] \in \Sigma^{\infty}(F)$.
Recall that $F$ is an HNN extension with base group $F(1) = \langle
x_1, x_2,\ldots \rangle$, associated subgroups $F(1)$ and $F(2)$ and
with stable letter $x_0$, where  $F(i) = \langle x_i, x_{i+1}, \ldots
\rangle$.  As $\{ x_i \}_{i \geq 1}$ are conjugate in $F$ we see that
$\chi(x_1) = \chi(x_i)<0 $ for all $i \geq 1$. Let $\widetilde{\chi}$
be the restriction of $\chi$ to $F(1)$. If we identify $F(1)$ with $F$
via the isomorphism that sends $x_i$ to $x_{i-1}$ for $i \geq 1$ , then
$\widetilde{\chi}$ gets identified with $- \chi_1$ and, by Corollary
\ref{CorB}, $[- \chi_1] \in \Sigma^{\infty}(F)$. Thus we have:

\medskip
\begin{cor} 
\begin{equation}
\{ [\chi] \in S(F) \mid \chi(x_1) < 0 \}  \subseteq \Sigma^{\infty}(F).
\end{equation}
\end{cor}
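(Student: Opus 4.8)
The plan is to obtain the inclusion as a direct application of the theorem just stated to the canonical ascending HNN decomposition of $F$, with base group $H = F(1)$ and stable letter $t = x_0$. Given a character $\chi$ on $F$ with $\chi(x_1) < 0$, I would verify the three hypotheses of that theorem for the restriction $\chi|H$: that $\chi|H \neq 0$, that $H$ is of type $F_{\infty}$, and that $[\chi|H] \in \Sigma^{\infty}(H)$; the conclusion $[\chi] \in \Sigma^{\infty}(F)$ is then immediate.

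The first two hypotheses are quickly disposed of. The generators $x_i$ with $i \geq 1$ are all conjugate in $F$ (conjugation shifts indices via the relations $x_i^{-1} x_n x_i = x_{n+1}$), and since every character factors through the abelianization, $\chi(x_i) = \chi(x_1) < 0$ for all $i \geq 1$; in particular $\chi|H \neq 0$, as $x_1 \in H$. Moreover $H = F(1)$ is isomorphic to $F$, which has type $F_{\infty}$.

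The substantive step is the third hypothesis, and it rests on a single identification. Transporting $\chi|H$ across the index-shift isomorphism $F(1) \to F$ that sends $x_i$ to $x_{i-1}$ for $i \geq 1$, I obtain a character on $F$ sending each generator $x_j$ to the constant value $\chi(x_1)$; comparing with the special character $\chi_1$, whose value on every $x_j$ is $1$, this transported character equals $\chi(x_1)\,\chi_1$. Since $\chi(x_1) < 0$, it is a \emph{negative} multiple of $\chi_1$, so $[\chi|H]$ is identified with $[-\chi_1]$, which lies in $\Sigma^{\infty}(F)$ by Corollary \ref{CorB}; as $\Sigma$-invariants are isomorphism invariants, $[\chi|H] \in \Sigma^{\infty}(H)$. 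The only point requiring care---and the sole candidate for a genuine obstacle---is this sign: it is exactly the hypothesis $\chi(x_1) < 0$ that sends $[\chi|H]$ to the antipode $[-\chi_1]$ rather than to $[\chi_1]$, and it is $[-\chi_1]$, not $[\chi_1]$, that Corollary \ref{CorB} certifies to be in $\Sigma^{\infty}(F)$.
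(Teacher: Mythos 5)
Your proposal is correct and is essentially identical to the paper's own argument: the paper likewise applies the Meinert-based HNN theorem to $F = F(1)*_{\phi, x_0}$, uses conjugacy of the $x_i$ ($i \geq 1$) to get $\chi(x_i) = \chi(x_1) < 0$, and identifies $\chi|_{F(1)}$ with $-\chi_1$ (up to positive scalar) via the index-shift isomorphism, invoking Corollary \ref{CorB}. Your explicit attention to the sign, i.e.\ that the transported character is $\chi(x_1)\chi_1$ and hence represents $[-\chi_1]$, is if anything slightly more careful than the paper's phrasing.
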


This shows that the open interval in the circle $S(F)$  from
$[\chi _0]$ to  $[-\chi _0]$ which contains  $[-\chi _1]$ lies in
$\Sigma^{\infty}(F)$. By $\nu$-symmetry its image under $\nu$ has the
same property, and this enlarges the interval in question to cover the whole
``long" open interval between $[\chi _0]$ and $[\chi _1]$.  In summary:

\medskip \begin{prop}\label{Meinertcor} All of $S(F)$ except possibly
the closed convex sum of the points  $[\chi _0]$ and $[\chi _1]$ lies
in $\Sigma^{\infty}(F)$.  \end{prop}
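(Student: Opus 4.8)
The plan is to obtain the statement by assembling the preceding corollary (that $\{[\chi]\mid \chi(x_1)<0\}\subseteq\Sigma^\infty(F)$) with $\nu$-symmetry; the only real work is a bookkeeping argument on the circle $S(F)$. First I would coordinatize: writing a general character as $a\chi_0+b\chi_1$, the facts $\chi_0(x_1)=0$ and $\chi_1(x_1)=1$ turn the condition $\chi(x_1)<0$ into $b<0$. The functional $\chi\mapsto\chi(x_1)$ vanishes exactly on the antipodal pair $\{[\chi_0],[-\chi_0]\}$, so $\{b<0\}$ is one of the two open semicircles they bound, namely the arc from $[\chi_0]$ to $[-\chi_0]$ through $[-\chi_1]$; the preceding corollary places this entire open semicircle in $\Sigma^\infty(F)$.

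Next I would bring in $\nu$. Since $\nu$ is conjugation by the involution $t\mapsto 1-t$, it squares to the identity and interchanges the (right) derivative at $0$ with the (left) derivative at $1$; consequently the induced map on $Hom(F,\BR)$ swaps $\chi_0$ and $\chi_1$, acting in coordinates by $(a,b)\mapsto(b,a)$. Because $\nu$ permutes $\Sigma^\infty(F)$, the $\nu$-image of the semicircle $\{b<0\}$ again lies in $\Sigma^\infty(F)$, and the coordinate swap shows this image to be the complementary semicircle $\{a<0\}$, i.e. the arc from $[\chi_1]$ to $[-\chi_1]$ through $[-\chi_0]$.

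Finally I would take the union: $\{a<0\}\cup\{b<0\}$ is the complement of $\{a\geq0,\,b\geq0\}$, and the latter is by definition the closed convex sum of $[\chi_0]$ and $[\chi_1]$. Hence the two semicircles together cover exactly the open ``long'' interval, and that is the claim. The argument involves no analytic difficulty --- everything hard is already encapsulated in the HNN theorem and its corollary --- so the sole point needing care is this final geometric check: that the two open semicircles share the boundary pair $\{[\chi_0],[\chi_1]\}$ and that their union omits precisely the closed shorter interval.
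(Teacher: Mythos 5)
Your proposal is correct and follows essentially the same route as the paper: apply the corollary to get the open semicircle $\{[\chi]\mid\chi(x_1)<0\}$ (bounded by $\pm[\chi_0]$ and containing $[-\chi_1]$) inside $\Sigma^\infty(F)$, map it by $\nu$-symmetry to the complementary semicircle, and observe that the union is exactly the complement of the closed convex sum of $[\chi_0]$ and $[\chi_1]$. Your explicit coordinate bookkeeping (the $(a,b)\mapsto(b,a)$ action of $\nu$ on $Hom(F,\BR)$ and the identification $\chi(x_1)=b$) merely spells out what the paper asserts in prose, and all the checks are accurate.
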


\subsection{The ``shorter" interval} 

For the homotopical version of Theorem A we could simply apply the
following:

\begin{theorem}\label{desithm}\cite{Desi} {\it Let $G$ be a finitely presented group
which has no free non-abelian subgroup.  Then\footnote{See Sec. \ref{conv} for the definition of $conv_{\leq 2}$.} $conv_{\leq 2}\Sigma^1(G)^c
\subseteq \Sigma^2(G)^c$.} \end{theorem}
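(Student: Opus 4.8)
The plan is to prove the statement homologically and then transfer it. Since $\Sigma^2(G)\subseteq\Sigma^2(G;\BZ)$, passing to complements gives $\Sigma^2(G;\BZ)^c\subseteq\Sigma^2(G)^c$, so it suffices to establish the inclusion $conv_{\leq 2}\Sigma^1(G)^c\subseteq\Sigma^2(G;\BZ)^c$; the homotopical version then follows at once. The singleton part of $conv_{\leq 2}$ is immediate because $\Sigma^2\subseteq\Sigma^1$, so the real content concerns a pair $[\chi_1],[\chi_2]\in\Sigma^1(G)^c$ together with a point $[\chi]$ on the geodesic arc between them, $\chi=a\chi_1+b\chi_2$ with $a,b>0$; after rescaling I may assume $\chi=\chi_1+\chi_2$. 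To detect $\Sigma^2(G;\BZ)^c$ I would use the Novikov--Sikorav characterization: for $G$ of type $FP_2$, one has $[\psi]\in\Sigma^2(G;\BZ)$ if and only if $H_k(G;\widehat{\BZ G}_{\psi})=0$ for $k=0,1,2$, where $\widehat{\BZ G}_{\psi}$ denotes the Novikov ring in the direction of $\psi$. The goal thus becomes to exhibit a nonzero class in $H_k(G;\widehat{\BZ G}_{\chi})$ for some $k\leq 2$.

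The core is to manufacture such a class from the two one-dimensional obstructions supplied by the hypothesis. By the same characterization, $[\chi_i]\in\Sigma^1(G)^c=\Sigma^1(G;\BZ)^c$ gives $H_1(G;\widehat{\BZ G}_{\chi_i})\neq 0$ for $i=1,2$, while $H_0$ vanishes because $\chi_i\neq 0$. Fixing a free resolution of $\BZ$ over $\BZ G$ that is finitely generated through dimension $2$, I would represent each obstruction by a $1$-cycle $z_i$ over $\widehat{\BZ G}_{\chi_i}$ and then combine $z_1$ and $z_2$ into a single $2$-dimensional class over $\widehat{\BZ G}_{\chi}$. The natural vehicles are a Pontryagin-type product together with a filtration/spectral-sequence argument comparing the single-direction ring $\widehat{\BZ G}_{\chi}$ with the iterated completion obtained by completing $\BZ G$ first along $\chi_1$ and then along $\chi_2$. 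The point is that a chain with support bounded in the $\chi$-direction need not be bounded in either $\chi_i$ separately, and this is exactly the slack needed to fit the two cycles together; on the $E_2$-page the combined class should survive in total degree $\leq 2$. Note that $[\chi]$ may itself lie in $\Sigma^1(G)$, so $H_1(G;\widehat{\BZ G}_{\chi})$ can vanish and the surviving obstruction is genuinely two-dimensional, which is why the conclusion is about $\Sigma^2$ rather than merely $\Sigma^1$.

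The hard part will be proving that this combined class does not vanish, i.e. that the two obstructions do not cancel once pushed into the single ring $\widehat{\BZ G}_{\chi}$, and this is precisely where the no-free-subgroup hypothesis is indispensable. I would invoke a finiteness property of the Novikov ring available under that hypothesis: that $\widehat{\BZ G}_{\chi}$, together with its matrix rings, is von Neumann finite, so that no endomorphism of a finitely generated free $\widehat{\BZ G}_{\chi}$-module can be surjective without being injective. This stable finiteness rules out the accidental splittings of boundary maps that would otherwise make the completed complex acyclic in degrees $\leq 2$, and hence forces the constructed class to be a genuine nonzero element of $H_k(G;\widehat{\BZ G}_{\chi})$. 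A free non-abelian subgroup would in general destroy this property of the Novikov ring, consistent with the fact that the inclusion $conv_{\leq 2}\Sigma^1(G)^c\subseteq\Sigma^2(G)^c$ is only conjectural without such a restriction. A purely geometric route---building a loop in $\Gamma^2_{\chi\geq 0}$ that remains essential in every $\Gamma^2_{\chi\geq d}$ by playing the disconnections of $\Gamma^1_{\chi_1\geq 0}$ and $\Gamma^1_{\chi_2\geq 0}$ against each other---is appealing but much harder to control uniformly in $d$, so I would keep it only as a heuristic check on the homological computation.
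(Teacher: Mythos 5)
Your very first step reduces the homotopical statement to the homological inclusion $conv_{\leq 2}\Sigma^1(G)^c\subseteq \Sigma^2(G;\BZ)^c$, which is strictly \emph{stronger} (since $\Sigma^2(G;\BZ)^c\subseteq\Sigma^2(G)^c$). But this homological version is exactly what the present paper singles out as unknown: the authors state that ``the homological version of Theorem \ref{desithm} is only known under restrictive conditions,'' and this is the very reason they avoid invoking Theorem \ref{desithm} (which they cite from \cite{Desi} without proof) and instead argue directly for $F$ via Theorem \ref{bieristrebel}. So from the outset you have committed yourself to proving an open problem as a stepping stone to a known theorem; the reduction goes in the wrong direction.

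The sketch that follows does not close this gap. First, there is no ring homomorphism $\widehat{\BZ G}_{\chi_1}\to\widehat{\BZ G}_{\chi}$ when $\chi=\chi_1+\chi_2$: a series whose support is bounded below in the $\chi_1$-direction need not be bounded below in the $\chi$-direction, so your $1$-cycles $z_1,z_2$ cannot even be transported into the target ring, let alone multiplied --- and $H_*(G;M)$ carries no Pontryagin product for a general nonabelian $G$. The ``iterated completion'' and its spectral sequence are never defined, and no mechanism is exhibited that turns two degree-$1$ classes over two different rings into a degree-$2$ class over a third. Second, the nonvanishing argument is a non sequitur: even granting that the no-free-subgroup hypothesis makes $\widehat{\BZ G}_{\chi}$ stably finite (which you assert without justification), stable finiteness does not obstruct a complex of finitely generated free modules from being exact in degrees $\leq 2$; indeed $H_0=H_1=0$ over $\widehat{\BZ G}_{\chi}$ holds whenever $[\chi]\in\Sigma^1(G)$, which your setup explicitly permits. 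The known role of the hypothesis is quite different: by \cite[Thm.~A]{BieriStrebel}, a group of type $FP_2$ with no nonabelian free subgroup decomposes, for each discrete character, as an \emph{ascending} HNN extension with finitely generated base in the kernel. The workable route to Theorem \ref{desithm} --- the one underlying \cite{Desi} and the substitute argument in this paper --- combines such HNN decompositions (cf. \cite{Renz3}) with Corollary \ref{renzbiericor}, the density of discrete points, and the openness of $\Sigma^2$; your proposal never engages this mechanism, and what it offers instead is a hope, not a proof.
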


However, the homological version of Theorem \ref{desithm} is only known
under restrictive conditions, so we proceed in a manner which handles
the homotopical and homological versions at the same time.  We begin
by citing:

\begin{theorem}\label{bieristrebel} Let $G$ have no non-abelian free
subgroups and have type $FP_{2}(R)$.  Let $\tilde \chi:G\to \BR$ be a non-zero discrete character. Then $G$ decomposes as an ascending HNN extension $H*_{\phi,t}$ where $H$ is a finitely generated subgroup of ker$(\tilde \chi)$,
and $\tilde \chi (t)$ generates the image of $\tilde \chi$.  
\end{theorem}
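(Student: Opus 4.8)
The plan is to obtain the ascending HNN decomposition in two stages: first produce \emph{some} HNN splitting of $G$ over a finitely generated subgroup of $\ker(\tilde\chi)$ that is adapted to $\tilde\chi$, using only the finiteness hypothesis $FP_2(R)$; and then use the absence of non-abelian free subgroups to force that splitting to be ascending.

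For the first stage I would invoke the splitting theorem of Bieri--Strebel. After rescaling, regard $\tilde\chi$ as a surjection $G \to \BZ$ with kernel $N$, and choose $t \in G$ with $\tilde\chi(t)$ a generator of the image. The theorem then presents $G$ as an HNN extension $\langle H, t \mid t^{-1}At = B\rangle$ in which the base $H$ and the associated subgroups $A, B \le H$ are finitely generated and $H \le N$, with the isomorphism $A \to B$ given by conjugation by $t$. The mechanism is geometric: working in the Cayley $2$-complex (or in a free resolution finitely generated in dimensions $\le 2$), the $FP_2(R)$ hypothesis bounds the $\tilde\chi$-``width'' of the $2$-cells, and one extracts $H$ as the subgroup carried by a bounded slab $\{g : 0 \le \tilde\chi(g) \le c\}$, with $A$ and $B$ supported on its two boundary levels. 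Although the classical statement assumes $G$ finitely presented, the argument is really about the $\Sigma^1$-geometry and admits a homological version valid under $FP_2(R)$, which is exactly what the hypothesis supplies.

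For the second stage, suppose the HNN extension were \emph{not} ascending, i.e. that both $A$ and $B$ were proper in $H$. Then a standard ping-pong argument on the Bass--Serre tree of the splitting (playing the stable letter $t$ against coset representatives of $A$ and of $B$ in $H$) produces a non-abelian free subgroup of $G$, contradicting the hypothesis. Hence $A = H$ or $B = H$. If $A = H$, conjugation by $t$ maps $H$ monomorphically into $H$ with image $B$, so setting $\phi(h) := t^{-1}ht$ exhibits $G = H *_{\phi, t}$ as ascending; if instead $B = H$, the same conclusion holds after replacing the stable letter $t$ by $t^{-1}$, which leaves $\tilde\chi$ unchanged while $\tilde\chi(t^{-1})$ still generates the image. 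In either case $H$ is finitely generated and contained in $\ker(\tilde\chi)$ and $\tilde\chi(t)$ generates the image, as required.

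The main obstacle is the first stage. Passing from a discrete character to a genuine HNN decomposition whose base is \emph{finitely generated and inside the kernel} is the substantive content, and carrying this out under the weak hypothesis $FP_2(R)$ rather than full finite presentation is the delicate point. By contrast the second stage is comparatively soft, being the familiar fact that a two-sided proper HNN extension contains a non-abelian free subgroup.
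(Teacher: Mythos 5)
Your proposal is correct and follows essentially the same route as the paper, which simply cites \cite[Thm.~A]{BieriStrebel} for the HNN splitting over a finitely generated subgroup of the kernel and then uses the absence of non-abelian free subgroups to force the extension to be ascending. One small correction: the Bieri--Strebel theorem is already stated for ``almost finitely presented'' groups, which is precisely type $FP_2(R)$ (see the paper's footnote citing \cite[VIII 5]{Brown3}), so no strengthening from a finitely presented version is needed.
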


This is an immediate consequence of \cite[Thm.~A]{BieriStrebel}.
That theorem yields an HNN extension, and the hypothesis about free
subgroups ensures it is an ascending HNN extension\footnote{The
equivalence of ``almost finitely presented" with respect to $R$, the term actually used
in \cite{BieriStrebel}, and $FP_{2}(R)$ is well-known: see, for example,
Exercise 3 of \cite[VIII 5]{Brown3}.}.

\medskip

We apply Theorem \ref{bieristrebel} to understand $\Sigma^2(F;R)$.
Consider the non-zero character $a\chi _{0}+b\chi _1$ where
$a, b\in \BQ$.  Let $G:=$ker$(a\chi _{0}+b\chi _1)$.  Since $F/F'$ is
a free abelian group of rank 2, it is not hard to see that $G=\langle
F',t\rangle$ for some $t\in F$.  For the same reason, there is a
non-zero discrete character $\tilde \chi :G\to \BR$ whose kernel is $F'$ such
that $\tilde \chi (t)$ generates $im(\tilde \chi )$. We assume that $G$
has type $FP_{2}(R)$ and we consider what this implies.  By Theorem
\ref{bieristrebel} the existence of $\tilde \chi$ implies that $G$
decomposes as $H*_{\phi, t}$ where $H$ is a finitely generated subgroup
of $F'$. The group $F'$ consists of all PL homeomorphisms whose left
and right slopes are 1.  Since $H$ is finitely generated, there must
exist $\epsilon >0$ such that all elements of $H$ are supported in
the interval $[\epsilon , 1-\epsilon ]$. We may assume $\epsilon$ is
so small that the PL homeomorphism $t$ is linear on $[0, \epsilon ]$
and on $[1-\epsilon ,1]$.

\medskip

The character $\tilde \chi $ expresses $G$ as a semidirect product of $F'$
and $\BZ$.  Thus we have $F'=\cup_{n\geq 1}t^{n}Ht^{-n}$.  So for each
$x\in F'$ there is some $n>0$ such that $t^{-n}xt^{n}\in H$, and hence the
support of $t^{-n}xt^{n}$ lies in $[\epsilon, 1-\epsilon ]$.  

\medskip

This implies that the support of $x$ lies in $[t^{n}(\epsilon ), t^{n}(1-\epsilon)]$, and hence these end points have subsequences  converging to 0 and
1 respectively as $x$ varies in $F'$.  If $t$ has
slope $\geq 1$ on $[0,\epsilon ]$ then $t(\epsilon ) \geq \epsilon $
so  $t^{n}(\epsilon ) \geq \epsilon $ for all $n>0$.  Therefore $t$
must have slope $<1$ near 0.  Similarly  $t$ must have slope $<1$
near 1.  Since $a\chi _{0}(t)+b\chi _{1}(t)=0$ it follows that
(still assuming $G$ has type $FP_{2}(R)$)  $ab<0$.  Expressing the contrapositive, we have 

\begin{prop} If $ab>0$ then ker$(a\chi _{0}+b\chi _{1})$ does not have type $FP_{2}(R)$.
\hfill$\square$
\end{prop}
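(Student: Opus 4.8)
The plan is to prove the proposition by contraposition: assuming that $G := \ker(a\chi_0 + b\chi_1)$ is of type $FP_2(R)$, I will deduce that $ab < 0$. The driving tool is Theorem \ref{bieristrebel}, whose hypotheses hold here because $G$, being a subgroup of $F$, has no non-abelian free subgroup, and because $G$ is assumed to be of type $FP_2(R)$.

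First I would fix the algebraic structure. Since $F/F' \cong \BZ^2$ and $a, b$ are rational, the character $a\chi_0 + b\chi_1$ has infinite cyclic image, so its kernel satisfies $G = \langle F', t\rangle$ for a suitable $t \in F$ and $G/F'$ is infinite cyclic. Hence there is a discrete character $\tilde\chi : G \to \BR$ with kernel $F'$ such that $\tilde\chi(t)$ generates the image of $\tilde\chi$. Feeding $\tilde\chi$ into Theorem \ref{bieristrebel} produces a decomposition $G = H *_{\phi, t}$ with $H$ a finitely generated subgroup of $F'$, and realizes $G$ as a semidirect product of $F'$ by $\langle t\rangle$. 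In particular $F' = \bigcup_{n \geq 1} t^n H t^{-n}$ is an ascending union.

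The geometric core comes next, and this is where I expect the main obstacle to lie. Because $H$ is finitely generated and every element of $F'$ has both endpoint slopes equal to $1$, the supports of the generators of $H$, and therefore of all of $H$, lie in a fixed interval $[\epsilon, 1-\epsilon]$; I would shrink $\epsilon$ so that $t$ is linear on $[0, \epsilon]$ and on $[1-\epsilon, 1]$. For an arbitrary $x \in F'$, choosing $n$ with $t^{-n} x t^n \in H$ confines the support of $x$ to $[t^n(\epsilon), t^n(1-\epsilon)]$. Since $F'$ contains homeomorphisms supported arbitrarily close to $0$ and to $1$, these intervals must exhaust $(0,1)$, so $t^n(\epsilon) \to 0$ and $t^n(1-\epsilon) \to 1$ along subsequences. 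The delicate point is converting this dynamical behaviour into a slope condition: if $t$ had slope $\geq 1$ on $[0, \epsilon]$ then $t^n(\epsilon) \geq \epsilon$ for all $n$, contradicting $t^n(\epsilon) \to 0$; so $t$ contracts near $0$, giving $\chi_0(t) < 0$, and the symmetric argument at the other end gives $\chi_1(t) < 0$.

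Finally I would carry out the sign bookkeeping. The defining relation $a\chi_0(t) + b\chi_1(t) = 0$, together with $\chi_0(t) < 0$ and $\chi_1(t) < 0$, is incompatible with $a$ and $b$ sharing a sign and with either being zero; hence $a$ and $b$ have opposite signs, that is $ab < 0$. By contraposition, $ab > 0$ forces $G$ to fail to be of type $FP_2(R)$, as claimed.
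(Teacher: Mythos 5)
Your proposal is correct and takes essentially the same approach as the paper's own proof: both assume $G=\ker(a\chi_0+b\chi_1)$ has type $FP_2(R)$, invoke the Bieri--Strebel theorem (applicable since $F$ has no non-abelian free subgroups) to decompose $G$ as an ascending HNN extension $H*_{\phi,t}$ with $H$ a finitely generated subgroup of $F'$ supported in some $[\epsilon,1-\epsilon]$, use the ascending union $F'=\bigcup_{n\geq 1}t^nHt^{-n}$ to force $t$ to have slope $<1$ near both $0$ and $1$, and conclude $ab<0$ from $a\chi_0(t)+b\chi_1(t)=0$. Your handling of the dynamical step (ruling out slope $\geq 1$ via $t^n(\epsilon)\geq\epsilon$) and the final sign analysis match the paper exactly, so there is nothing to correct.
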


Now assume $a$ and $b$ are positive and rational. Write 
$\chi =a\chi _{0}+b\chi _{1}$; thus $\chi $ is discrete.
By Corollary \ref{renzbiericor}, ker$(\chi )$ has type $FP_{2}(R)$
if and only if both $[\chi ]$ and $[- \chi ]$ lie in $\Sigma^2(F;R)$.
But by Proposition \ref{Meinertcor}  $[- \chi ]\in \Sigma^{2}(F;R)$.
So $[\chi ]$ cannot lie in $\Sigma^{2}(F;R)$.

\medskip

\begin{prop}\label{hull} No point in the open convex sum of $[\chi _0]$
and $[\chi _1]$ (i.e. the shorter open interval) lies in $\Sigma^{2}(F;R)$.  
\end{prop}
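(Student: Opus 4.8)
The plan is to deduce Proposition \ref{hull} from the material already assembled by a single density-and-openness argument. The substantive input---that $\ker(a\chi_0+b\chi_1)$ fails type $FP_2(R)$ whenever $ab>0$---has already been extracted from the support analysis of PL homeomorphisms in $F'$, so no further hard work on $F$ itself is required.

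First I would record the conclusion at the \emph{rational} points of the shorter open interval; this is exactly the computation carried out just above the statement. Fix rational $a,b>0$ and set $\chi=a\chi_0+b\chi_1$. Since $\chi_0$ and $\chi_1$ take values in $\BZ$ and $a,b$ are rational, the image $\chi(F)$ is a nonzero subgroup of $\frac{1}{q}\BZ$ for a suitable $q$, hence infinite cyclic; thus $\chi$ is discrete and Corollary \ref{renzbiericor} is applicable. The preceding Proposition asserts that $\ker(\chi)$ is not of type $FP_2(R)$, so by Corollary \ref{renzbiericor} at least one of $[\chi]$ and $[-\chi]$ lies in $\Sigma^2(F;R)^c$. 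But $-\chi$ does not lie in the closed convex sum of $[\chi_0]$ and $[\chi_1]$, so Proposition \ref{Meinertcor} gives $[-\chi]\in\Sigma^\infty(F)\subseteq\Sigma^2(F;R)$. Therefore $[\chi]\in\Sigma^2(F;R)^c$, and every rational point of the shorter open interval lies outside $\Sigma^2(F;R)$.

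It remains to pass from rational to arbitrary points, and here the only genuinely new ingredient enters: the basic fact recalled in Subsection \ref{conv} that $\Sigma^2(F;R)$ is an \emph{open} subset of $S(F)$ (it is defined because $F$ has type $FP_\infty(R)$). Hence $\Sigma^2(F;R)^c$ is closed. By definition the shorter open interval is the set of all $[a\chi_0+b\chi_1]$ with $a,b>0$ real, and the points with $a,b$ rational are dense in it. A closed set containing a dense subset of this arc contains the entire arc, so $\Sigma^2(F;R)^c$ contains every point of the shorter open interval. This is the assertion of Proposition \ref{hull}.

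I expect the main---indeed the only---obstacle to be conceptual rather than computational: the discrete-character criterion of Corollary \ref{renzbiericor} is available only at the rational points, since an irrational combination $a\chi_0+b\chi_1$ has dense image in $\BR$ and so is not discrete. One therefore cannot run the kernel argument pointwise across the whole interval, and it is exactly the openness of the Sigma invariant that bridges this gap. Once openness is invoked, the extension to all points is a routine topological limiting argument.
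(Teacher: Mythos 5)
Your proposal is correct and follows essentially the same route as the paper: the paper likewise notes that for rational $a,b>0$ the discrete character $\chi=a\chi_0+b\chi_1$ has $[-\chi]\in\Sigma^{2}(F;R)$ by Proposition \ref{Meinertcor} while $\ker(\chi)$ fails $FP_2(R)$, so Corollary \ref{renzbiericor} forces $[\chi]\in\Sigma^{2}(F;R)^c$, and then concludes by density of these rational points together with the openness of $\Sigma^{2}(F;R)$ in $S(F)$. Your added justifications (why $\chi$ is discrete, why $[-\chi]$ avoids the closed convex sum) are accurate elaborations of steps the paper leaves implicit.
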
 
\begin{proof}
We have just shown that a dense subset of the open convex sum lies
in $\Sigma^{2}(F;R)^c$, and since $\Sigma^{2}(F;R)$ is open in $S(F)$
this is enough.  \end{proof}

The proof of  Theorem A is completed by recalling that for any ring $R$
\begin{enumerate}
\item  $\Sigma^{1}(F;R)=\Sigma^{1}(F)$, and
\item  $\Sigma^m(F)\subseteq \Sigma^m(F;R).$
\end{enumerate}

\section{Subgroups of $F$ with different finiteness properties}\label{Subgroups}

As before, we denote the complement of any subset $A$ of a sphere by $A^c$.  The Direct Product Formula for homological Sigma invariants (which is not always true) reads as follows:

$$ \Sigma^n(G\times H; R)^c =\bigcup^n_{p=0} \Sigma^p(G;R)^c *\Sigma^{n-p} (H;R)^c$$

Here, $*$ refers to ``join" of subsets of the spheres $S(G)$ and $S(H)$
which are considered to be subspheres of the sphere $S(G\times H)$.
In particular, when $p=0\text{ or }n$ one of these sets is empty, and
then the join is treated in the usual way: e.g., $A*\emptyset =A$.

\medskip

It has been known for many years that one inclusion of the Direct Product
Formula is always true:

\begin{theorem}(Meinert's Inequality)\label{Meinert}
$$ \Sigma^n(G\times H; R)^c \subseteq \bigcup^n_{p=0} \Sigma^p(G;R)^c *
\Sigma^{n-p} (H;R)^c$$ and $$ \Sigma^n(G\times H)^c \subseteq \bigcup^n_{p=0} \Sigma^p(G)^c *\Sigma^{n-p} (H)^c$$
\end{theorem}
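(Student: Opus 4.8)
The plan is to establish the contrapositive and reduce both displayed inclusions to a single connectivity estimate for the product filtration. Since $Hom(G\times H,\BR)=Hom(G,\BR)\oplus Hom(H,\BR)$, every character of $G\times H$ has the form $\chi=\chi_G\oplus\chi_H$, and correspondingly $S(G\times H)=S(G)*S(H)$ with $S(G)$ and $S(H)$ as antipodal subspheres. For $A\subseteq S(G)$ and $B\subseteq S(H)$, a point $[\chi]$ with both $\chi_G\neq 0$ and $\chi_H\neq 0$ lies in $A*B$ exactly when $[\chi_G]\in A$ and $[\chi_H]\in B$, while a pole point $[\chi_G\oplus 0]\in S(G)$ lies in $A*B$ exactly when $[\chi_G]\in A$ (recall $A*\emptyset=A$). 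Put $a:=\sup\{p:[\chi_G]\in\Sigma^p(G;R)\}$ and $b:=\sup\{q:[\chi_H]\in\Sigma^q(H;R)\}$; both are $\ge 0$ because $\Sigma^0$ is the whole sphere, and by the nesting $\Sigma^{k+1}\subseteq\Sigma^k$ one has $[\chi_G]\in\Sigma^p(G;R)\Leftrightarrow p\le a$, and similarly for $b$. An elementary count then shows, for an interior point $[\chi]$, that $[\chi]\in\Sigma^p(G;R)^c*\Sigma^{n-p}(H;R)^c$ for some $0\le p\le n$ if and only if the open interval $(a,\,n-b)$ contains an integer, i.e. if and only if $a+b\le n-2$. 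Hence the first inclusion is equivalent to the implication: \emph{if $a+b\ge n-1$ then $[\chi]\in\Sigma^n(G\times H;R)$.}

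This implication follows at once from the

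\textbf{Key Lemma.} If $[\chi_G]\in\Sigma^a(G;R)$ and $[\chi_H]\in\Sigma^b(H;R)$, then $[\chi_G\oplus\chi_H]\in\Sigma^{a+b+1}(G\times H;R)$,

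since $a+b+1\ge n$ gives $[\chi]\in\Sigma^n(G\times H;R)$ by nesting. (The homotopical statement is identical, with $R$-homology replaced by homotopy groups throughout.) To set it up geometrically, choose a $K(G,1)$ and a $K(H,1)$ with finite $n$-skeleta; their product is a $K(G\times H,1)$ of the same kind with universal cover $\widetilde Y=\widetilde Y_G\times\widetilde Y_H$, and $\chi=\chi_G\oplus\chi_H$ lifts to the height function $h(x,y)=\chi_G(x)+\chi_H(y)$, the sum of the two factor heights. I will use the reformulation (equivalent to the definition in Subsection \ref{review}) that $[\psi]\in\Sigma^k$ iff the sublevel filtration $\{\psi\ge s\}$ of the universal cover is \emph{essentially $(k-1)$-connected} (resp. essentially $(k-1)$-acyclic over $R$): for each $s$ there is $t\le s$ so that the inclusion $\{\psi\ge s\}\hookrightarrow\{\psi\ge t\}$ is trivial on $\pi_i$ (resp. on $\widetilde H_i(-;R)$) for all $i\le k-1$. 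The hypotheses thus say that the $\widetilde Y_G$-filtration is essentially $(a-1)$-connected and the $\widetilde Y_H$-filtration essentially $(b-1)$-connected, and the goal is that the $h$-filtration of $\widetilde Y$ is essentially $(a+b)$-connected.

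The heart of the matter, and the step I expect to be the main obstacle, is a filtered analogue of the classical fact that the join of an $(a-1)$-connected space and a $(b-1)$-connected space is $(a+b)$-connected. The geometric point is the decomposition
\[
\widetilde Y_{h\ge s}=\bigcup_{u+v\ge s}\widetilde Y_{G,\,\ge u}\times\widetilde Y_{H,\,\ge v},
\]
which exhibits $\widetilde Y_{h\ge s}$ as a ``budget-split'' homotopy colimit over the poset of pairs $(u,v)$ with $u+v\ge s$; descending from $s$ to a sufficiently small $t$ lets both budgets drop independently, and the two essential-connectivity hypotheses can then be fed into a Mayer--Vietoris / homotopy-colimit spectral sequence to kill $\pi_i$ for all $i\le a+b$. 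Homologically one argues in parallel on the tensor-product resolution $C^G_*\otimes_R C^H_*$ of $R$ over $R[G\times H]$, whose $h$-filtration is the convolution of the two factor filtrations, via a filtered K\"unneth/Eilenberg--Zilber argument. The difficulty is entirely quantitative: one must choose the descent level $t$ uniformly (using cocompactness of the $G\times H$-action) and verify that the connectivities of the two factors add with the extra $+1$ shift in $\Sigma$-degree supplied by the join coordinate, exactly as in the unfiltered case.

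Granting the Key Lemma, both displayed inclusions follow, the homological and homotopical versions running in parallel. Only the degenerate boundary cases remain, and these are immediate: if $\chi_H=0$ then $h$ is constant in the $\widetilde Y_H$-direction, so $\widetilde Y_{h\ge s}=\widetilde Y_{G,\,\ge s}\times\widetilde Y_H$ has the connectivity of $\widetilde Y_{G,\,\ge s}$ because $\widetilde Y_H$ is contractible; hence $[\chi_G\oplus 0]\in\Sigma^n(G\times H;R)$ iff $[\chi_G]\in\Sigma^n(G;R)$, which agrees with the fact that the union on the right meets the pole $S(G)$ precisely in $\Sigma^n(G;R)^c$. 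The case $\chi_G=0$ is symmetric.
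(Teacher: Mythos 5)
Your reduction is correct as far as it goes, and it is worth noting at the outset that the paper itself offers no proof of Theorem \ref{Meinert}: it cites \cite[Section~9]{Gehrke2} for the homological statement and \cite{Bieri2} for the homotopical one, so your attempt must be measured against those proofs. Your combinatorial bookkeeping is sound: with $a=\sup\{p:[\chi_G]\in\Sigma^p(G;R)\}$ and $b=\sup\{q:[\chi_H]\in\Sigma^q(H;R)\}$, an interior point lies in the union on the right exactly when some integer $p$ satisfies $a<p<n-b$, i.e.\ when $a+b\le n-2$, so the inequality is equivalent to your Key Lemma, which is the form $\Sigma^a(G;R)*\Sigma^b(H;R)\subseteq\Sigma^{a+b+1}(G\times H;R)$ in which Meinert's inequality is usually stated and proved in the literature; your treatment of the pole points is also correct. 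The trouble is that this reduction is essentially a reformulation: the Key Lemma \emph{is} the theorem, and everything of substance is deferred to it.

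The genuine gap is that your argument for the Key Lemma is a program, not a proof, and you flag its central step yourself as ``the main obstacle.'' Two points make the gap concrete. First, for the homotopical version there is no Mayer--Vietoris or homotopy-colimit spectral sequence computing homotopy groups, so ``feeding the two essential-connectivity hypotheses into a spectral sequence to kill $\pi_i$'' is not an available move; one needs gluing theorems of van Kampen/Blakers--Massey type over the staircase poset, with basepoint control, and --- more seriously --- the hypothesis is only \emph{essential} triviality of inclusion-induced maps on $\pi_i$, which is strictly weaker than actual $(a-1)$- and $(b-1)$-connectivity of the sublevel sets, so the classical ``join of an $(a-1)$-connected and a $(b-1)$-connected space is $(a+b)$-connected'' does not apply off the shelf. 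Making the essential versions add, with the $+1$ shift, is precisely the content of the cited proofs (Gehrke argues homologically via valuations on the tensor product resolution $C^G_*\otimes_R C^H_*$ --- which matches your convolution-filtration sketch and could plausibly be completed along those lines --- while \cite{Bieri2} handles the homotopical case by a separate ``connectivity length'' calculus). Second, the uniform choice of the descent level $t$ is asserted but never performed: in the decomposition of $\widetilde Y_{h\ge s}$ as a union of products $\widetilde Y_{G,\ge u}\times\widetilde Y_{H,\ge v}$, lowering the budget $u$ of one piece changes its intersections with all neighboring pieces, and one must prove that essential triviality propagates coherently through the nerve of the covering; cocompactness gives uniform constants for each factor but does not by itself organize these interlocking descents. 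The delicacy here is not hypothetical --- the failure of the reverse inclusion over $\BZ$ \cite{Schuetz} shows that filtered product phenomena resist soft arguments. So: correct skeleton and correct identification of the standard dual formulation, but the theorem's actual content remains unproven in your write-up.
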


Meinert did not publish this, but a proof can be found in \cite[Section~9]{Gehrke2}. The paper \cite{Bieri2} also contains a proof of the homotopy version. 

\medskip

It is proved in \cite{RossBieri3} that the Direct Product Formula holds when $R$
is a field.  On the other hand, an example in \cite{Schuetz} shows that
the Formula does not always hold when $R=\BZ$.  However, it is shown in
\cite{RossBieri3} that when $\Sigma^n(G;\BZ)= \Sigma^n(G;\BQ)$ for all $n$
then the Direct Product Formula does hold when $R=\BZ$.  Writing $F^r$ for the
$r$-fold direct product of copies of $F$, one concludes (by induction
on $r$) that the Formula holds for $F^r$ when $R=\BZ$. More precisely, we have:

\begin{theorem}\label{join} Let $r\geq 2$.  Then for all $n$
$$ \Sigma^n(F^{r};\BZ)^c =\bigcup^n_{p=0} \Sigma^p(F;\BZ)^c *\Sigma^{n-p}(F^{r-1};\BZ)^c$$
and $\Sigma^{n}(F^{r})=\Sigma^{n}(F^{r};\BZ)$.
\end{theorem}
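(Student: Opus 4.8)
The plan is to treat the two assertions in turn, deriving the homological Direct Product Formula first and then using it, together with the homotopical Meinert inequality, to collapse the homotopical and homological invariants onto each other. The key inputs are Theorem A (which gives $\Sigma^n(F;\BZ)=\Sigma^n(F)=\Sigma^n(F;\BQ)$ for all $n$), the conditional Direct Product Formula of \cite{RossBieri3}, and Meinert's Inequality (Theorem \ref{Meinert}).

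For the first assertion I would apply the conditional result of \cite{RossBieri3} directly, taking $G=F$ and $H=F^{r-1}$. By Theorem A the hypothesis ``$\Sigma^n(G;\BZ)=\Sigma^n(G;\BQ)$ for all $n$'' is satisfied by the factor $G=F$. Since $F$ is of type $F_\infty$, so is $F^{r-1}$, and the relevant finiteness conditions hold. The cited result then yields
$$\Sigma^n(F^r;\BZ)^c=\bigcup_{p=0}^n \Sigma^p(F;\BZ)^c*\Sigma^{n-p}(F^{r-1};\BZ)^c$$
for all $n$, which is exactly the first displayed identity. Note that no hypothesis on the second factor $F^{r-1}$ is needed, so this step is unconditional once Theorem A is in hand.

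For the second assertion I would argue by induction on $r$, the case $r=1$ being the statement $\Sigma^n(F)=\Sigma^n(F;\BZ)$ of Theorem A. The inclusion $\Sigma^n(F^r)\subseteq\Sigma^n(F^r;\BZ)$ always holds, so it suffices to prove $\Sigma^n(F^r)^c\subseteq\Sigma^n(F^r;\BZ)^c$. Writing $F^r=F\times F^{r-1}$ and applying the homotopical Meinert inequality gives
$$\Sigma^n(F^r)^c\subseteq\bigcup_{p=0}^n \Sigma^p(F)^c*\Sigma^{n-p}(F^{r-1})^c.$$
By Theorem A each $\Sigma^p(F)^c$ equals $\Sigma^p(F;\BZ)^c$, and by the inductive hypothesis each $\Sigma^{n-p}(F^{r-1})^c$ equals $\Sigma^{n-p}(F^{r-1};\BZ)^c$; substituting and invoking the first assertion (the homological Formula now established for $F^r$) identifies the right-hand side with $\Sigma^n(F^r;\BZ)^c$. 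This yields the missing inclusion and hence the desired equality.

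The step I expect to require the most care is the bookkeeping that keeps the induction non-circular. The homological Formula must be available independently of the homotopy-homology coincidence, which is why I would derive it straight from \cite{RossBieri3} using only Theorem A, rather than from any prior statement about $F^r$ itself; the coincidence is then proved afterwards, feeding the inductive hypothesis for $F^{r-1}$ into both Meinert's inequality and the already-proved Formula for $F^r$. One should also verify that the join operation and the identification of $S(F)$ and $S(F^{r-1})$ as complementary subspheres of $S(F^r)$ are used consistently on both sides, so that the two unions over $p$ are literally the same subset of $S(F^r)$; granting this, the two inclusions close up and the proof is complete.
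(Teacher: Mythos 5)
Your overall strategy coincides with the paper's own (very terse) proof: the displayed identity is obtained from the conditional Direct Product Formula of \cite{RossBieri3} fed by Theorem A, and the coincidence $\Sigma^{n}(F^{r})=\Sigma^{n}(F^{r};\BZ)$ is obtained from the always-valid inclusion $\Sigma^{m}(G)\subseteq\Sigma^{m}(G;R)$ together with the homotopical Meinert inequality, Theorem A, induction on $r$, and the already-established homological formula for $F^{r}$. Your second paragraph is exactly the expansion of the paper's one-line proof of the last sentence, and your non-circularity bookkeeping is sound.

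There is, however, one genuine gap: your claim that the criterion of \cite{RossBieri3} needs the hypothesis $\Sigma^{n}(G;\BZ)=\Sigma^{n}(G;\BQ)$ only on the first factor $G=F$, so that the first identity is ``unconditional once Theorem A is in hand.'' The criterion requires the $\BZ$--$\BQ$ coincidence for \emph{both} factors, which is precisely why the paper obtains the formula for $F^{r}$ \emph{by induction on $r$}: one must also supply $\Sigma^{n}(F^{r-1};\BZ)=\Sigma^{n}(F^{r-1};\BQ)$ for all $n$. The transfer argument behind the criterion makes the asymmetry visible. The hard inclusion over $\BZ$ is deduced from the field case via
$$\bigcup^{n}_{p=0}\Sigma^{p}(F;\BQ)^{c}*\Sigma^{n-p}(F^{r-1};\BQ)^{c}=\Sigma^{n}(F^{r};\BQ)^{c}\subseteq\Sigma^{n}(F^{r};\BZ)^{c},$$
and to replace the $\BQ$-complements on the left by $\BZ$-complements one needs \emph{equality} in each join factor; monotonicity of Sigma invariants under $\BZ\to\BQ$ gives only $\Sigma^{n-p}(F^{r-1};\BQ)^{c}\subseteq\Sigma^{n-p}(F^{r-1};\BZ)^{c}$, which points the wrong way for the second factor. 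Fortunately the repair is cheap and fits the induction you already run for the second assertion: strengthen the inductive hypothesis to include $\Sigma^{n}(F^{r-1};\BZ)=\Sigma^{n}(F^{r-1};\BQ)$ for all $n$. This follows by comparing the (unconditional) field-case formula for $F^{r-1}$ over $\BQ$ with the inductively known $\BZ$-formula for $F^{r-1}$, using Theorem A for the $F$-factor and the inductive coincidence for $F^{r-2}$; the base case $r-1=1$ is Theorem A itself. With that amendment your argument closes and agrees with the paper's.
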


\begin{proof} Only the last sentence requires some explanation.  It follows
from Meinert's Inequality (Theorem \ref{Meinert}) together with the fact
that for any group $G$ we have $\Sigma^{m}(G)\subseteq \Sigma^{m}(G;R)$.
\end{proof}

Theorem A implies that $\Sigma^{m}(F)^{c}$ is a (spherical)
1-simplex if $m\geq 2$, is the 0-skeleton of that 1-simplex when $m=1$, and
is empty (i.e., the (-1)-skeleton of the 1-simplex) when $m=0$.  And that
1-simplex has the property that it is disjoint from its negative.
It follows from Theorem \ref{join} that $\Sigma^{m}(F^{r})^{c}$
is the $(m-1)$-skeleton of a spherical $(2r-1)$-simplex in the $(2r-1)$-sphere
$S(F^r)$, a simplex which is disjoint from its negative.

\medskip

We now prove Theorem B.  Consider $[\chi]$ in $S(F^r)$ which lies in the $(m-1)$-skeleton but
not in the $(m-2)$-skeleton of the $(2r-1)$-simplex.  Since the discrete
characters are dense we can always choose $\chi $ discrete.  Then $[\chi
]$ lies in $\Sigma^{m}(F^{r})^{c}\cap \Sigma^{m-1}(F^{r})$ while $[-\chi
]$ lies in $\Sigma^{m}(F^{r})$.  Thus, by Corollary \ref{renzbiericor},
the kernel of $\chi $ has type $F_{m-1}$ but not type $FP_{m}(\BZ)$
when $m<2r-1$.   Now, $F$ contains copies of $F^r$ for all $r$; for
example, let $0<t_{1}<\cdots <t_{r-1}<1$ be a subdivison of $[0,1]$
into $r$ segments where the subdivision points are dyadic rationals.
The subgroup of $F$ which fixes all the points $t_i$ is a copy of $F^r$.
Thus Theorem B is proved.

\medskip

{\bf Example:} Here is an explicitly described subgroup $G_r\leq F$
which has type $F_{2r-1}$ but does not type $FP_{2r}(\BZ)$. Fix a dyadic
subdivision of $[0,1]$ into $r$ subintervals as above.  Let $G_r$ denote
the subgroup of $F$ consisting of all elements $x$ for which the product
of the numbers in the following set $D_r$ equals 1. The members of $D_r$
are: the left and right derivatives of $x$ at the $(r-1)$ subdivision
points $t_i$, the right derivative of $x$ at 0, and the left derivative
of $x$ at 1.  This subgroup of $F$ (we consider $F^r$ embedded in $F$
as above) corresponds to the barycenter of the $(2r-1)$-simplex, and
thus has the claimed properties.

\begin{rem} \rm{This example is ``structurally stable" in the following
sense: The interior of the $(2r-1)$-simplex is open in the sphere
$S(F^r)$. Thus all the points in that open set which correspond to
discrete characters on $F^r$ (they are dense) give rise to groups $\tilde
G_r$ with exactly the finiteness properties possessed by $G_r$.  These
groups $\tilde G_r$ should  be thought of as all the normal subgroups
of $F^r$ ``near" $G_r$ which have infinite cyclic quotients.} \end{rem}

\bibliographystyle{amsplain}

\end{document}